\documentclass[a4paper,11pt]{amsart}

\usepackage[left = 3.5 cm, right = 3.5 cm]{geometry}
\usepackage{amsthm,amssymb,amsmath,amsfonts,mathrsfs,amscd,amsbsy,dsfont,verbatim}
\usepackage{stmaryrd}
\usepackage{graphicx}
\usepackage[all,cmtip]{xy}
\usepackage{longtable}
\usepackage{marginnote}
\usepackage{comment}
\usepackage{babel}
\usepackage{latexsym}
\usepackage{mathtools}
\usepackage[utf8]{inputenc}
\usepackage{xcolor}
\usepackage{fancyhdr}
\usepackage{indentfirst}
\usepackage{paralist}
\usepackage{hyperref}
\usepackage{tikz}
\usetikzlibrary{cd}
\usepackage{enumerate}
\usepackage{multicol}
\usepackage{comment}
\usepackage{todonotes}
\usepackage[all,cmtip]{xy}
\usepackage[title]{appendix}
\usepackage{url}

\newtheorem{theorem}{Theorem}[section]
\newtheorem{proposition}[theorem]{Proposition}
\newtheorem{lemma}[theorem]{Lemma}
\newtheorem{corollary}[theorem]{Corollary}
\newtheorem{conjecture}[theorem]{Conjecture}

\newtheorem{question}[theorem]{Question}
\newtheorem*{theorem*}{Theorem}

\theoremstyle{definition}
\newtheorem{definition}{Definition}[section]
\newtheorem{remark}[theorem]{Remark}
\newtheorem{notation}[definition]{Notation}

\newcommand{\Q}{\mathbb Q}
\newcommand{\Z}{\mathbb Z}

\newcommand{\F}{\mathbb F}
\newcommand{\p}{\mathfrak{p}}
\newcommand{\f}{\mathfrak{f}}
\newcommand{\e}{\mathfrak{e}}

\newcommand{\GL}{\text{GL}}

\DeclareMathOperator{\Gal}{Gal}
\DeclareMathOperator{\Aut}{Aut}
\DeclareMathOperator{\End}{End}

\DeclareMathOperator{\Frob}{Frob}

\renewcommand{\bar}{\overline}
\DeclareMathOperator{\Tors}{Tors}

\newcommand{\mat}[4]{\left(\begin{array}{cc}
#1 &#2\\
#3 &#4
\end{array}\right)}

\title{Torsion points on \(\GL_2\)-type abelian varieties}
\date{\today}

\author[J. Alessandrì]{Jessica Alessandrì}
\address{Max Planck Institute for Mathematics\\
Vivatsgasse 7\\
53111 Bonn\\
Germany}
\email{alessandri@mpim-bonn.mpg.de}
\urladdr{https://sites.google.com/view/jessicaalessandri}
\author[N. Coppola]{Nirvana Coppola}
\address{Dipartimento di Matematica, Università di Padova\\
Via Trieste, 63\\
35121 Padova\\
Italy}
\email{nirvana.coppola@unipd.it}
\urladdr{https://sites.google.com/view/nirvanacoppola/}

\subjclass[2020]{Primary: 11G10, Secondary: 11F11, 11F80, 14G35, 14Q15}

\begin{document}

\begin{abstract}
It is well known that the rational torsion of an abelian variety defined over a number field injects into the reduction modulo any sufficiently large prime, so the order of the torsion group divides the greatest common divisor of the sizes of points on the reduction at each prime. Drawing inspiration from Katz's Inventiones paper (1981), we investigate the converse to this for abelian varieties of \(\rm GL_2\)-type and exhibit a conjectural list of possible torsion orders for modular abelian varieties over \(\mathbb Q\) of dimension up to \(5\).
\end{abstract}

\maketitle

\section{Introduction}\label{sec:intro}
For any elliptic curve \(E\) over \(\Q\), the structure of the torsion subgroup \(\Tors(E)(\Q)\) is well understood, thanks to the celebrated theorem of Mazur \cite[Theorem 8]{Mazur1977ModularIdeal}, where he determined which finite abelian groups \(\Tors(E)(\Q)\) is isomorphic to and showed that \(|\Tors(E)(\Q)| \leq 16\).
Mazur's work was then generalised to other number fields: Merel \cite{Merel1996BornesNombres} showed that for any number field \(K\) and any elliptic curve \(E/K\) the order of the torsion subgroup \(\Tors(E)(K)\) is bounded by a constant that only depends on \(K\). In unpublished work, Oesterl\'e \cite{Oesterle1994TorsionNombres} showed that this bound can be taken to be \((3^{[K:\Q]+1})^2\) (for a proof, see \cite[Appendix A]{Derickx2016TorsionPublications}) .

For higher dimensional abelian varieties over general number fields, although the growth of the torsion group can be bounded by a constant depending on the variety and the degree of the field of definition (see e.g.\ \cite{Fourn2022TorsionField} and \cite{Clark2008LocalVarieties} for the case of local fields), no uniform bound is known.
Even for abelian surfaces, a uniform bound on the size of \(\Tors(A)(\Q)\) was only provided recently, under some assumptions on the geometric endomorphism ring \(\End_{\overline{\Q}}(A)\) of \(A\). Namely, in \cite{Laga2024RationalMultiplication}, it is shown that if \(A\) is an abelian surface with \emph{potential quaternionic multiplication} (that is if \(\End_{\overline{\Q}}(A)\) is a maximal order in a division quaternion algebra over \(\Q\)) then \(|\Tors(A)(\Q)| \leq 18\). Moreover, the only prime numbers dividing the order of a rational torsion point are \(2\) and \(3\), and there is an explicit list of possible candidates that can appear as torsion subgroups. This list gives a complete classification of such groups in the special case of abelian surfaces \emph{of \(\GL_2\)-type} with potential quaternionic multiplication.

In this paper, we work more generally on abelian varieties of \(\GL_2\)-type defined over a number field \(K\), without further restrictions on the dimension and on the endomorphisms.
Our strategy is to use a classical approach in the study of rational points, that is, answering to a \emph{local-global problem}.
Indeed, for any abelian variety \(A/K\), if \(\p\) is a prime of good reduction for \(A\), the group \(\Tors(A)(K)\) injects into the finite group of \(\F_\p\)-points of the reduction of \(A\) modulo \(\p\). Thus we have that \(|\Tors(A)(K)|\) divides \(N(\p):=|\tilde{A}(\F_\p)|\), where \(\tilde{A}\) is the reduction of \(A\) modulo \(\p\).
A famous question by Lang asks if the converse is also true. More precisely he asked the following.
\begin{question}\label{question}
    Let \(A\) be an abelian variety over a number field \(K\) and let \(m\ge 2\) be an integer number. Suppose that 
    \[
    N(\p) \equiv 0 \pmod m
    \]
    holds for a set of places \(\p\) of density one. Does there exist a \(K\)-isogenous abelian variety \(A'\) for which the set of \(K\)-rational torsion points satisfies
    \[
    |\Tors(A')(K)| \equiv 0 \pmod m ?
    \]
\end{question}
Katz \cite{Katz1981GaloisVarieties} finds a positive answer to this question for the case of elliptic curves, and to a weaker version of it for the case of abelian surfaces. More precisely: the set of primes dividing the order of the torsion subgroup of at least one variety \(A'\) that is isogenous to \(A\), and the set of primes dividing all of the \(N(\p)\)'s are the same. However, he proves that there are counterexamples for abelian varieties of higher dimension, even for this weaker result.
After Katz, a lot of interest has been shown towards the classification of more counterexamples, see for example \cite{Cullinan2007LocalglobalVarieties, Cullinan2010PointsZarhin} and \cite{Cullinan2022DivisibilityFields}.
This problem is linked to the so-called \emph{local-global divisibility problem} in algebraic groups (see \cite[\S 8]{Dvornicich2022Local-globalGroups}), to which Gillibert and Ranieri give an answer in \cite{Gillibert2017OnVarieties, Gillibert2020OnVarieties}, in the case of \(\GL_2\)-type abelian varieties.

In this work, we prove the following theorem, which gives a positive answer to Katz's weaker question for \(\GL_2\)-type abelian varieties, in all dimensions.
\begin{theorem*}[{See Corollary \ref{cor:main}}] \label{thm:main2}
    Let \(A\) be an abelian variety of \(\GL_2\)-type over a number field \(K\). Let \(\Sigma\) be the set of primes \(\p\) of \(K\) of good reduction for \(A\) such that the absolute ramification index of \(\p\) is smaller than \(p-1\), where \(p\) is the rational prime under \(\p\). Then, the two integers
    \[
    \sup_{A'\text{ K-isogenous to }A} |\Tors(A')(K)|\qquad \text{ and } \qquad \gcd_{\p \in \Sigma} N(\p)
    \]
    are divisible by the same primes of good reduction for \(A\). More precisely, for any rational prime \(\ell\) of good reduction for \(A\), one has
    \[
    v_\ell\left(\sup_{A'\text{ K-isogenous to }A} |\Tors(A')(K)|\right)\le v_\ell\left(\gcd_{\p \in \Sigma} N(\p)\right).
    \]
    Moreover, if \(\ell\) is any such prime and it is totally inert in \(F\), then
    \[
    v_\ell\left(\sup_{A'\text{ K-isogenous to }A} |\Tors(A')(K)|\right)=v_\ell\left(\gcd_{\p \in \Sigma} N(\p)\right).
    \]
\end{theorem*}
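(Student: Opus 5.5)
The plan is to work one prime \(\ell\) at a time with the \(\lambda\)-adic Galois representations attached to \(A\). Write \(F\) for the totally real (or CM) field with \(F\hookrightarrow \End_K(A)\otimes\Q\) and \([F:\Q]=\dim A = g\). Fix a rational prime \(\ell\) of good reduction. The Tate module \(V_\ell(A)\) is a free \(F\otimes\Q_\ell\)-module of rank \(2\), so it splits as \(\bigoplus_{\lambda\mid\ell} V_\lambda\), with each \(V_\lambda\) a \(2\)-dimensional \(F_\lambda\)-representation \(\rho_\lambda\) of \(G_K\). For \(\p\in\Sigma\) of good reduction and \(\p\nmid\ell\), we have
\[
N(\p)=\det(1-\Frob_\p\mid V_\ell)=\prod_{\lambda\mid\ell} N_{F_\lambda/\Q_\ell}\bigl(\det(1-\rho_\lambda(\Frob_\p))\bigr).
\]
If \(\p\mid\ell\), I would instead use the hypothesis \(e<p-1\): by Raynaud's theorem the reduction map on \(\ell\)-power torsion is injective. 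This is exactly why \(\Sigma\) is defined as it is.

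The inequality is the easy direction. If \(A'\) is \(K\)-isogenous to \(A\), then \(A'\) has good reduction at every \(\p\) where \(A\) does, and \(N(\p)\) is an isogeny invariant. The \(\ell\)-part of \(\Tors(A')(K)\) injects into \(\tilde A'(\F_\p)\): for \(\p\nmid\ell\) this is standard, and for \(\p\mid\ell\) with \(e<p-1\) it follows from Raynaud's result on finite flat group schemes. Hence \(\ell^{v_\ell(|\Tors(A')(K)|)}\) divides \(N(\p)\) for every \(\p\in\Sigma\). Taking the sup over \(A'\) and the gcd over \(\p\) gives the stated inequality. The fact that the two integers have the same prime divisors then needs the converse for divisibility by \(\ell\) alone. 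That is, if \(\ell\mid N(\p)\) for all \(\p\in\Sigma\), then some isogenous \(A'\) has a rational point of order \(\ell\). I will prove this together with the equality.

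For the converse, the key step is Chebotarev. By density, the condition that \(\ell^n\) divides \(\det(1-g\mid V_\ell)\) for all \(g\) in the image of \(G_K\) holds as soon as it holds for Frobenii at \(\p\in\Sigma\), since \(\Sigma\) has density one. Suppose first that \(\ell\) is totally inert in \(F\). Then there is one \(\lambda\), \(F_\lambda\) is unramified of degree \(g\) over \(\Q_\ell\), and \(v_\ell(N(\p))=g\cdot v_\lambda(\det(1-\rho_\lambda(\Frob_\p)))\). Let \(n=v_\lambda\) of the gcd. We then have a \(2\)-dimensional \(\mathcal O_\lambda\)-representation \(T\) of a profinite group \(G\) with \(\det(1-g)\equiv 0 \bmod \lambda^n\) for all \(g\in G\). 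I would run Katz's elliptic curve argument verbatim with \(\Z_\ell\) replaced by the DVR \(\mathcal O_\lambda\). His argument is purely a statement about \(2\times2\) matrices over a DVR: it produces a \(G\)-stable lattice \(T'\subset V_\lambda\) containing a vector \(v\) of exact order \(\lambda^n\) modulo \(T'\) that is fixed by \(G\). One gets it by considering the semisimplification mod \(\lambda\), finding a trivial constituent, and iterating through the lattices in the Bruhat--Tits tree. The lattice \(T'\) is \(\mathcal O\)-stable, where \(\mathcal O=\End(A)\cap F\), after possibly enlarging to \(\mathcal O_F\) by an isogeny. It therefore corresponds to an isogenous \(A'\) with \(T_\ell(A')=T'\), and \(\mathcal O_\lambda v/\lambda^n\cong \mathcal O_\lambda/\lambda^n\) gives rational torsion of order \(\ell^{gn}\), matching the valuation of the gcd. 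For a general \(\ell\) and the prime-divisor statement, the argument is similar. If \(\ell\) divides all \(N(\p)\), then for every \(g\) some \(\lambda\mid\ell\) has \(\det(1-\rho_\lambda(g))\in\lambda\). I would use the standard trick that a finite union of proper closed subsets cannot cover the group, or pass to the \(\lambda\)-components and a Zariski/Haar measure argument. This gives a single \(\lambda\) for which \(1\) is an eigenvalue of \(\bar\rho_\lambda(g)\) for all \(g\). Katz's \(n=1\) argument then yields a fixed vector in the residual representation of some stable lattice, hence a point of order \(\ell\) on an isogenous variety.

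I expect two steps to be the main obstacles. The first is adapting Katz's \(2\)-dimensional lemma to \(\mathcal O_\lambda\)-coefficients while keeping the lattice stable under the endomorphism order. This needs care when \(\End(A)\cap F\neq\mathcal O_F\); I would first replace \(A\) by an isogenous variety with multiplication by \(\mathcal O_F\). The second is the "one \(\lambda\) works for all \(g\)" step when \(\ell\) splits. For that I would use that the set of \(g\) with \(\det(1-\bar\rho_\lambda(g))\neq 0\) is a union of cosets of an open normal subgroup, together with a group-theoretic covering argument. Failing that, I would use Katz's own surface argument as a model.
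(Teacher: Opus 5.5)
\textbf{Gap: the single-\(\lambda\) step.} Your treatment of the inequality matches the paper's argument: injectivity of reduction at \(\p\in\Sigma\). So does your treatment of the equality for totally inert \(\ell\): \v{C}ebotarev, then Katz's Theorem 1 bis over \(\mathcal O_{F_\lambda}\), then stable lattices versus \(\ell\)-power isogenies. The genuine gap is the prime-divisor claim when \(\ell\) has several primes above it in \(F\), where you need one \(\lambda\mid\ell\) that works for all \(g\). The hypothesis \(\ell\mid N(\p)\) only says that each \(g\) lies in \(S_\lambda=\{g:\det(1-\bar{\rho}_\lambda(g))=0\}\) for \emph{some} \(\lambda\), which may depend on \(g\).

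These sets are unions of cosets of an open normal subgroup, so they are open and closed, of positive Haar measure. They are congruence conditions, not Zariski-closed conditions on an irreducible group. Hence neither the ``proper closed subsets cannot cover'' trick nor a measure argument applies, and a profinite group can be a finite union of such proper subsets. For instance, take \(\ell\) odd, \(G=(\Z/2\Z)^2\), \(\bar{\rho}_{\lambda_1}(a,b)=\mathrm{diag}((-1)^a,(-1)^b)\) and \(\bar{\rho}_{\lambda_2}(a,b)=(-1)^{a+b} I\). Then \(S_{\lambda_1}\cup S_{\lambda_2}=G\), yet neither representation has a trivial constituent, so Theorem \ref{thm:Katz} with \(n=1\) applies at neither \(\lambda_i\). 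Excluding such configurations needs arithmetic input linking the various \(\bar{\rho}_\lambda\), for example through their determinants, as the Weil pairing does in Katz's surface case. The proposal does not provide this, and pointing to Katz's surface argument as a model is not a proof.

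The paper does not close this gap either. Its proof picks \(\lambda_0\) with \(v_{\lambda_0}(P_\p^{\lambda_0}(1))>0\) for a single \(\p\) and then applies Theorem \ref{thm:main_new}, whose hypothesis requires that same \(\lambda_0\) for a density-one set of \(\p\). So you located the real difficulty correctly. As written, though, both arguments prove the prime-divisor statement only when a single prime of \(F\) lies over \(\ell\); in the totally inert case your argument is complete.

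There are two smaller corrections.
\begin{enumerate}
\item Katz's theorem yields lattices \(\mathcal L'\subseteq\mathcal L\) with \(G\)-trivial quotient \(\mathcal O/\lambda^a\oplus\mathcal O/\lambda^b\), \(a+b=n\). It does not give a fixed vector of exact order \(\lambda^n\), and the cyclic version is false in general. For example, let \(G\) be the diagonal matrices with entries in \(1+\lambda\mathcal O_{F_\lambda}\) and \(n=2\); then every \(G\)-fixed class modulo any stable lattice is killed by \(\lambda\). Only the order \(\ell^{\f_\lambda n}\) matters, so this does not affect your conclusion.
\item No \(\mathcal O\)-stability or preliminary isogeny is needed. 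As \(G_K\)-modules, \(V_\ell(A)=\bigoplus_\lambda V_\lambda\), so a direct sum of stable lattices in the \(V_\lambda\) is a stable \(\Z_\ell\)-lattice. Every \(G_K\)-stable \(\Z_\ell\)-lattice in \(V_\ell(A)\) is, up to homothety, \(T_\ell(A')\) for some \(\ell\)-power isogenous \(A'\).
\end{enumerate}
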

This theorem will follow from Theorem \ref{thm:main_new}, where we reformulate Katz's problem in terms of Galois representations.

Finally, we show how one can apply our main results to study the torsion of \emph{modular abelian varieties} defined over the rationals, which, by Serre's conjectures, correspond to weight \(2\) newforms. 
We implement our results in the computational algebra system \texttt{Magma}, in order to produce tentative lists of possible torsion orders for modular abelian abelian varieties of dimension up to \(5\), a part of which is presented in Conjectures \ref{conjecture1} and \ref{conjecture2}.
The code and the outputs can be found at \cite{Alessandri2026MagmaVarieties}. Although it is not currently known if the lists are exhaustive, we remark that all the torsion orders appearing in the lists occur for at least one abelian variety, which can be exhibited in terms of the corresponding newform. This data enhances the data that is currently available in \texttt{LMFDB} \cite{LMFDBDatabase} and in \cite{Nicholls2018DescentCurves}. 
For example, there exists at least one abelian surface in the isogeny class associated to the level \(39\) newform with \texttt{LMFDB} label \texttt{\href{https://www.lmfdb.org/ModularForm/GL2/Q/holomorphic/39/2/a/b/}{39.2.a.b}} which
has torsion order equal to \(28\). This abelian variety is not listed in \texttt{LMFDB} and there are no abelian surfaces with torsion order equal to \(28\) in the database either.

\section{Setting}\label{sec:setting}
In this section, we recall the definition of \(\GL_2\)-type abelian variety and properties of the Galois representations attached to it, in order to reformulate Katz's problem in terms of Galois-stable lattices.

Let \(A\) be an abelian variety of dimension \(g\) defined over a number field \(K\), with fixed algebraic closure \(\bar{K}\). 
\begin{definition}\label{def:GL2}
    We say that \(A\) is of \(\GL_2\)-type if there exists a number field \(F\) of degree \(g\) over \(\Q\), with an embedding:
    \[
    F \hookrightarrow \End_K(A) \otimes_\Z \Q.
    \]
\end{definition}

\begin{remark}
    In \cite{Ribet1976GaloisMultiplications}, the author deals with abelian varieties \(A/K\) of \(\GL_2\)-type having full endomorphism algebra defined over \(K\), claiming that \emph{all of the ``serious'' results have to do} with this case. We will not make such an assumption, since our main result holds for any abelian variety of \(\GL_2\)-type.
\end{remark}

Let \(\ell\) be a rational prime. One can define the \(\ell\)-adic Galois representation attached to \(A\) as
\[
\rho_\ell : G_K = \Gal(\overline{K}/K) \rightarrow \Aut(T_\ell(A)) \subseteq \Aut(V_\ell(A)),
\]
where:
\begin{itemize}
    \item \(T_\ell(A)\) is the \(\ell\)-adic Tate module of \(A\) (see e.g. \cite{Serre1968GoodVarieties} for the definition);
    \item \(V_\ell(A)= T_\ell(A) \otimes_{\Z_\ell} \Q_\ell\).
\end{itemize}

Since \(A\) is of \(\GL_2\)-type, one has a \(F\)-rational system of compatible representations \((\rho_\lambda)_{\lambda \mid \ell}\) (see \cite{Ribet1976GaloisMultiplications, Lombardo2016ExplicitGL2-varieties, Azon2024ConductorCurves} for the construction), such that $$\rho_\lambda : G_K \rightarrow \GL_2(F_\lambda)$$ is a 2-dimensional representation. The sketch of the construction is as follows: we have \(F_\ell = F \otimes_\Q \Q_\ell = \prod_{\lambda \mid \ell} F_\lambda\), and \(V_\lambda (A) = V_\ell (A) \otimes_{F_\ell} F_\lambda\). The action of \(G_K\) on \(V_\lambda (A)\) is the representation \(\rho_\lambda\). We let \(\f_\lambda =[F_\lambda : \Q_\ell]\), and \(\mathbb{F}_\lambda\) be the residue field \(\mathcal{O}_{F_\lambda} / \lambda \mathcal{O}_{F_\lambda}\), which has size \(\ell^{\f_\lambda}\).

Moreover, we will need to use the lattice \(T_\lambda (A)= T_\ell (A) \otimes_{\mathcal{O}_{F_\ell}} \mathcal{O}_{F_\lambda}\), where \( \mathcal{O}_{F_\ell} = \mathcal{O}_F \otimes_\Z \Z_\ell\).

\begin{notation}
    Let \(\p\) be a finite place of \(K\), such that \(A\) has good reduction at \(\p\). We denote by \(\Frob_\p\) a Frobenius element in \(G_K\) corresponding to \(\p\); since \(A\) has good reduction at \(\p\), the representations \(\rho_\ell\) and \(\rho_\lambda\) are unramified at \(\p\), thus \(\rho_\ell(\Frob_\p)\) and \(\rho_\lambda(\Frob_\p)\) are well defined.
    We denote by \(P_\p^\ell(t)=\det(1-t\rho_\ell(\Frob_\p))\) the usual characteristic polynomial of Frobenius, and by \(P_\p^\lambda(t)=\det(1-t\rho_\lambda (\Frob_\p))\) the characteristic polynomial of \(\rho_\lambda(\Frob_\p)\).
\end{notation}

\textit{A priori}, \(P_\p^\lambda(t)\) has coefficients in \(F_\lambda\). However, by \cite[Theorem 2.1.2]{Ribet1976GaloisMultiplications}, the system \(\{\rho_\lambda\}_{\lambda}\) is strictly compatible and \(F\)-rational with exceptional set equal to the set of places of \(K\) at which \(A\) has bad reduction, so, with possibly these finitely many exceptions, in fact \(P_\p^\lambda (t) \in F[t]\).

Throughout the paper, for \(x \in F_\lambda\) (or \(F\)), we write
\[
x \equiv 0 \pmod{\lambda^n}
\]
whenever the \(\lambda\)-adic valuation of \(x\) is at least \(n\).

\subsection{Katz and Lang's problem}\label{sec:Katz}

For a given abelian variety \(A/K\), and \(\p\) a finite place of \(K\) of good reduction for \(A\), we denote by \(\F_\p\) the residue field of \(K\) at \(\p\), by \(\tilde{A}(\F_\p)\) the \(\F_\p\)-points on the reduction of \(A\) modulo \(\p\) and we let \(N(\p) = |\tilde{A}(\F_\p)|\).

In \cite{Katz1981GaloisVarieties}, the following question is proposed: given an abelian variety \(A/K\), and \(m\ge 2\) an integer number, if 
\[
N(\p) \equiv 0 \pmod m
\]
holds for a set of places \(\p\) of density one, does there exist an isogenous abelian variety \(A'\) for which the set of \(K\)-rational torsion points satisfies
\[
|\Tors(A')(K)| \equiv 0 \pmod m ?
\]

In op.\ cit.\ this problem is reformulated in terms of power-of-a-prime isogenies and the Galois representation \(\rho_\ell\). In particular, the question becomes the following.
Let \(A/K\) be an abelian variety and \(\ell\) a rational prime such that there exists \(n \ge 1\) satisfying
\[
\det(1-\rho_\ell(\gamma)) \equiv 0 \pmod {\ell^n }
\]
for all \(\gamma \in G_K\). Note that, by \v{C}ebotarev Density Theorem \cite[Theorem 13.4]{Neukirch1999AlgebraicTheory}, this condition is equivalent to requiring the congruence for \(\gamma\) of the form \(\Frob_\p\) for a density one set of places, namely those of good reduction not dividing \(\ell\).
Does there exist a pair of \(G_K\)-stable lattices \(\mathcal{L'} \subseteq \mathcal{L} \subseteq V_\ell(A)\) such that the quotient has order \(\ell^n\) and such that \(G_K\) acts trivially on it?

As mentioned, Katz finds a positive answer to this question for the case of elliptic curves \cite[Theorem 2, 2 bis]{Katz1981GaloisVarieties}, and to a weaker version of it for the case of abelian surfaces \cite[Theorem 4]{Katz1981GaloisVarieties}. We adapt Katz's approach for the proof of his Theorem 2 to the case of \(\GL_2\)-type abelian varieties, proving a refined version of his Theorem 4.

\section{Main results}\label{sec:main}

We will use the following result of Katz and Lang \cite[Theorem 1 (bis)]{Katz1981GaloisVarieties} on \(2\)-dimensional representations with coefficients in a local field.

\begin{theorem}\label{thm:Katz}
    Let \(L\) be a field which is complete under a discrete valuation, and whose residue field is finite. Let \(\mathcal{O}_L\) denote the ring of integers in \(L\), and let \(\pi\) denote a uniformising parameter. Let \(V\) be a 2-dimensional \(L\)-vector space, \(G \subseteq \Aut_L(V)\) a compact subgroup, and \(n \ge 1\) an integer. Suppose that we have the congruence
    \[
    \det(1-g) \equiv 0 \mod \pi^n \quad \forall g \in G.
    \]

    Then there exists a \(L\)-basis \(\{e_1,e_2\}\) of \(V\) and integers \(a,b \ge 0\) with \(a+b=n\) such that the elements of \(G\) expressed as matrices with respect to this basis lie in the subgroup
    \[
    \left\lbrace \mat{1 + \pi^a \mathcal{O}_L}{\pi^a \mathcal{O}_L}{\pi^b \mathcal{O}_L}{1 + \pi^b \mathcal{O}_L} \right\rbrace
    \]
    of \(\GL_2(\mathcal{O}_L)\).
\end{theorem}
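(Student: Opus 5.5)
The plan is to argue by induction on \(n\), working with \(G\)-stable lattices. Since \(G\) is compact, it stabilises some \(\mathcal{O}_L\)-lattice \(\Lambda\subseteq V\): take the \(\mathcal{O}_L\)-span of \(G\cdot \Lambda_0\) for any lattice \(\Lambda_0\), which is a lattice by compactness. In any basis of such a \(\Lambda\), every element of \(G\) lies in \(\GL_2(\mathcal{O}_L)\). The conclusion of the theorem is equivalent to the following statement: there exist \(G\)-stable lattices \(\Lambda'\subseteq\Lambda\) with \(\Lambda/\Lambda'\cong \mathcal{O}_L/\pi^n\) cyclic and \(G\) acting trivially on the quotient.

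To see this equivalence, suppose the matrix form holds in the basis \(e_1,e_2\), and set \(\Lambda=\langle \pi^{-a}e_1, e_2\rangle\) and \(\Lambda'=\langle \pi^{-a}e_1,\pi^{b}e_2\rangle\). A direct check of the congruence conditions shows that both lattices are \(G\)-stable and that \(G\) acts trivially on \(\Lambda/\Lambda'\cong\mathcal{O}_L/\pi^{n}\). Conversely, given such lattices, choose an adapted basis and conjugate back to obtain the matrix shape.

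The base step, \(n=1\), runs as follows. Reduce a \(G\)-stable lattice \(\Lambda\) modulo \(\pi\), giving a representation of \(G\) on \(\bar\Lambda=\Lambda/\pi\Lambda\) over the finite field \(k\). The image \(\bar G\) is a finite group in which every element has eigenvalue \(1\), since \(\det(1-\bar g)=0\). I would show that \(\bar G\) fixes a line or acts trivially on a quotient line. If \(\bar G\) is a \(p\)-group (\(p=\operatorname{char}k\)), it has nonzero fixed vectors. Otherwise, a finite subgroup of \(\GL_2(k)\) in which every element has eigenvalue \(1\) must fix a line or a coline; this follows by Katz's lemma on finite groups in which \(\det(1-g)=0\) identically, proved by a character/counting argument (Burnside-type: the average of \(\det(1-g)\) over a finite group relates to dimensions of invariants). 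If \(\bar G\) fixes \(\bar v\), lift to \(v\in\Lambda\) and take \(\Lambda'=\mathcal{O}_L v+\pi\Lambda\); then \(\Lambda/\Lambda'\) is one-dimensional with trivial action. The coline case is dual.

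For the inductive step, suppose the statement holds for \(n-1\), giving the shape with \(a+b=n-1\). Among all such pairs \((a,b)\) and bases, I would pick one extremal configuration and use the hypothesis \(\det(1-g)\equiv 0 \bmod \pi^n\) on the entries. Write \(g=\mat{1+\pi^a x}{\pi^a y}{\pi^b z}{1+\pi^b w}\). Then
\[
\det(1-g)=\pi^{n-1}(xw-yz),
\]
so the hypothesis forces \(xw\equiv yz \pmod \pi\) for all \(g\in G\). Reduction mod \(\pi\) of \(g\mapsto(x,y,z,w)\) defines a map of \(G\) into \(2\times2\) matrices over \(k\); this map is additive (a homomorphism into \(M_2(k)\)) when \(a,b\ge1\), with a twisted version in the edge cases \(a=0\) or \(b=0\). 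Its image is an additive subgroup consisting of singular matrices, and hence a \(k\)-span-like subspace of rank-\(\le1\) matrices. Such a subgroup has either a common kernel vector or a common image line. The common kernel case lets us raise \(a\) by one after rescaling a basis vector; the common image case lets us raise \(b\).

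I expect the main obstacle to be the linear-algebra lemma that an additive group of singular \(2\times 2\) matrices over \(k\) has a common kernel or common image. This must be handled carefully because the set is only a subgroup, not a subspace, and the edge cases \(a=0\) or \(b=0\) lose additivity. There I would instead use the multiplicative structure together with the \(n=1\) argument above.
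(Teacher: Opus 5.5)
The paper does not prove this statement; it quotes Katz's Theorem 1 (bis), which is proved there for \(\Q_p\). Your argument therefore has to stand on its own, and it has a genuine gap.

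\textbf{The missing case.} The inductive step is carried out only for shapes with \(a,b\ge1\). The case \(\min(a,b)=0\) is dismissed with ``use the multiplicative structure together with the \(n=1\) argument'', which is not an argument, and it is the case that carries the content. A shape with \(a,b\ge1\) forces \(g\equiv1\pmod\pi\) on the lattice, hence \(\det g\equiv1\pmod\pi\). So whenever \(\det\) is nontrivial modulo \(\pi\) on \(G\), every shape in your induction has \(\min(a,b)=0\) and the additive step is never used. This is the typical situation for \(G=\rho_\lambda(G_K)\), e.g.\ \(K=\Q\), trivial Nebentypus, \(\ell\) odd.

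Here is the kind of argument that must be supplied. Take shape \((0,m)\) with \(m=n-1\ge1\), and write \(g=\mat{\alpha}{y}{\pi^m z}{1+\pi^m w}\).
\begin{itemize}
\item \(\chi(g)=\bar\alpha\) is a character.
\item \(\bar y(gg')=\bar y(g)+\chi(g)\bar y(g')\) and \(\bar z(gg')=\bar z(g)\chi(g')+\bar z(g')\).
\item \(\bar w(gg')=\bar w(g)+\bar w(g')+\bar z(g)\bar y(g')\).
\item The hypothesis reads \((\chi-1)\bar w=\bar y\bar z\).
\end{itemize}
On \(G_0=\ker\chi\), \(\bar y\) and \(\bar z\) are homomorphisms with \(\bar y\bar z=0\), so either (i) \(\bar y\) or (ii) \(\bar z\) vanishes on \(G_0\). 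Since \([G:G_0]\) is prime to \(p\), that cocycle is a coboundary. It is killed by \(e_2\mapsto e_2+ce_1\) in case (i), resp.\ \(e_1\mapsto e_1+\pi^m d\,e_2\) in case (ii), and both changes preserve the shape. Now \(\bar w\) is a homomorphism vanishing off \(G_0\).
\begin{itemize}
\item If \(\chi\neq1\), then \(\bar w=0\) and you reach shape \((0,n)\): directly in case (ii), after \(e_2\mapsto\pi^{-1}e_2\) in case (i).
\item If \(\chi=1\), you reach \((1,m)\): directly in case (i), after \(e_1\mapsto\pi^{-1}e_1\) in case (ii).
\end{itemize}
The case \(b=0\) is symmetric.

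\textbf{The cyclic reformulation is false.} Take \(G=1+\pi M_2(\mathcal{O}_L)\) and \(n=2\). The theorem holds with \(a=b=1\). But every \(G\)-stable \(\Lambda\) satisfies \(\pi^{m+1}\mathcal{O}_L^2\subseteq\Lambda\subseteq\pi^m\mathcal{O}_L^2\) for some \(m\). Trivial action on \(\Lambda/\Lambda'\) then forces \(\pi^{m+1}\mathcal{O}_L^2\subseteq\Lambda'\), so \(\Lambda/\Lambda'\) is killed by \(\pi\). Your explicit pair also fails: its quotient is \(\mathcal{O}_L/\pi^b\), of length \(b\), not \(n\).

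The correct reformulation, as in the proof of Theorem \ref{thm:main_new}, is a \(G\)-stable \(\Lambda\) and a sublattice \(\Lambda''\) of colength \(n\) with \((g-1)\Lambda\subseteq\Lambda''\), the quotient being \(\mathcal{O}_L/\pi^a\oplus\mathcal{O}_L/\pi^b\). In this language your case \(a,b\ge1\) works for lines in any position; ``rescaling a basis vector'' only works for coordinate lines. Put \(\Delta=\mathrm{diag}(\pi^a,\pi^b)\).
\begin{itemize}
\item A common image line \(k\bar u\) gives \(\Lambda''=\Delta(\mathcal{O}_Lu+\pi\Lambda)\).
\item A common kernel vector \(\bar v\) gives \(M=\Lambda+\pi^{-1}\mathcal{O}_Lv\) with \((g-1)M\subseteq\Delta\Lambda\).
\end{itemize}
Elementary divisors then give the basis. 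The lemma you feared is easy and needs no \(k\)-linearity: if \(u_1v_1^{T}+u_2v_2^{T}\) is singular, then \(u_1\parallel u_2\) or \(v_1\parallel v_2\), and comparing with a third element globalizes this.

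\textbf{The base case.} You swapped the two situations. If \(\bar G\) fixes \(\bar v\), then \(G\) acts on \(\Lambda/(\mathcal{O}_Lv+\pi\Lambda)\) through \(\det\), not trivially. The right pair is \(\mathcal{O}_Lv+\pi\Lambda\supseteq\pi\Lambda\). The \(n=1\) lemma itself is just Brauer--Nesbitt: \(\det(1-\bar g)=0\) says \(\bar\Lambda\) and \(1\oplus\det\) have equal characteristic polynomials.
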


We remark that, in \cite{Katz1981GaloisVarieties}, a detailed proof of the case where \(L=\Q_p\) is provided. However, the proof adapts immediately to the general case. We are now ready to state our main result.

\begin{theorem}\label{thm:main_new}
    Let \(A\) be an abelian variety over a number field \(K\) of \(\GL_2\)-type. Let \(\ell\) be a rational prime and let \(\lambda\) be a prime of \(F\) over \(\ell\). Suppose that, for an integer \(n \ge 1\), the congruence
    \[
    P_\p^\lambda(1) \equiv 0 \pmod {\lambda^n}
    \]
    holds for a density one set of finite places \(\p\) of \(K\). Then there exists an abelian variety \(A'\), \(K\)-isogenous to \(A\) through a \(\ell\)-power isogeny, such that 
    \[
    |\Tors(A')(K)| \equiv 0 \pmod {\ell^{\f_{\lambda}n}},
    \]
    where \(\f_\lambda\) is the inertia degree of \(\lambda \mid \ell\).
\end{theorem}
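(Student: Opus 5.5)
We apply Theorem \ref{thm:Katz} with \(L=F_\lambda\), \(\pi\) a uniformiser of \(F_\lambda\), \(V=V_\lambda(A)\) and \(G=\rho_\lambda(G_K)\). The group \(G\) is compact, being a continuous image of the profinite group \(G_K\). First we upgrade the hypothesis from a density one set of Frobenius elements to all of \(G\). For \(\p\) of good reduction not above \(\ell\) we have \(P_\p^\lambda(1)=\det(1-\rho_\lambda(\Frob_\p))\). By \v{C}ebotarev, the Frobenius elements of any density one set of places are dense in \(G_K\). The map \(g\mapsto \det(1-g)\) is continuous on \(G\), and the set \(\{x\in F_\lambda : v_\lambda(x)\ge n\}\) is closed. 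Hence \(\det(1-g)\equiv 0 \pmod{\pi^n}\) for all \(g\in G\). Theorem \ref{thm:Katz} then gives a basis \(\{e_1,e_2\}\) of \(V_\lambda(A)\) and integers \(a+b=n\) such that \(G\) lies in the stated congruence subgroup.

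Next we construct the lattices. Put \(M=\mathcal{O}_{F_\lambda}e_1\oplus\mathcal{O}_{F_\lambda}e_2\) and \(M'=\pi^a\mathcal{O}_{F_\lambda}e_1\oplus \pi^b\mathcal{O}_{F_\lambda}e_2\subseteq M\). The matrix shape shows that \(M\) is \(G\)-stable. For \(M'\), the image of \(\pi^a e_1\) is \((1+\pi^a x)\pi^a e_1+\pi^{a+b}z\,e_2\in M'\), and the image of \(\pi^b e_2\) is \(\pi^{a+b}y\,e_1+(1+\pi^b w)\pi^b e_2\in M'\), so \(M'\) is \(G\)-stable too. Moreover \(g-1\) maps \(M\) into \(M'\): the image \((g-1)e_1=\pi^a x e_1+\pi^b z e_2\) lies in \(M'\), and similarly for \(e_2\). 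So \(G\) acts trivially on \(M/M'\cong \mathcal{O}/\pi^a\oplus\mathcal{O}/\pi^b\). This quotient has cardinality \(|\F_\lambda|^{a+b}=\ell^{\f_\lambda n}\).

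Finally we transfer this to \(T_\ell\) and to an isogenous variety. Use the decomposition \(V_\ell(A)=\prod_{\mu\mid\ell}V_\mu(A)\) induced by \(F_\ell=\prod F_\mu\). The \(\mathcal{O}_{F_\ell}\)-stable lattice \(T_\ell(A)\) need not be a product, since \(\End_K(A)\) may only contain an order of \(F\). To avoid this, replace \(A\) first by an \(\ell\)-power isogenous variety whose Tate module is \(\prod_\mu T_\mu\); for instance, take the \(\mathcal{O}_{F_\ell}\)-saturation \(\mathcal{O}_{F_\ell}T_\ell(A)\), which is \(G_K\)-stable and contains \(T_\ell(A)\) with \(\ell\)-power index. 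Then set
\[
\mathcal{L}=M\times\prod_{\mu\neq\lambda}T_\mu,\qquad \mathcal{L}'=M'\times\prod_{\mu\ne\lambda}T_\mu .
\]
Both are \(G_K\)-stable lattices in \(V_\ell(A)\), because \(G_K\) commutes with \(F\) and so preserves the decomposition. The quotient \(\mathcal{L}/\mathcal{L}'\cong M/M'\) is a trivial \(G_K\)-module of order \(\ell^{\f_\lambda n}\).

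By the standard dictionary, as in Katz's reformulation, \(\mathcal{L}'\) is the Tate module of an abelian variety \(A''\) \(\ell\)-power isogenous to \(A\) and defined over \(K\). The lattice \(\mathcal{L}\supseteq\mathcal{L}'\) corresponds to the quotient \(A'=A''/C\), where \(C=\mathcal{L}/\mathcal{L}'\subseteq \mathcal{L}'\otimes\Q_\ell/\Z_\ell=A''[\ell^\infty]\). Equivalently, \(A''\to A'\) has kernel \(C\), a \(G_K\)-stable finite subgroup. Since \(G_K\) acts trivially on \(C\), we get \(C\subseteq \Tors(A'')(K)\). Therefore \(A''\) is the desired variety and \(\ell^{\f_\lambda n}\) divides \(|\Tors(A'')(K)|\).

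The main obstacle is this last bookkeeping step: checking that \(T_\ell\) can be taken to split as a product over \(\mu\mid\ell\) after an \(\ell\)-power isogeny, and that the orientation is right, i.e.\ that the torsion lives on the variety attached to the smaller lattice \(\mathcal{L}'\). The remaining steps, namely the density argument and the matrix computation, are routine.
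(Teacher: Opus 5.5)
Your proof is correct and follows the paper's route: apply Katz's theorem to \(G=\rho_\lambda(G_K)\) acting on \(V_\lambda(A)\), take the lattices \(M'\subseteq M\) with trivial action on the quotient, and realise the smaller one as the Tate module of an \(\ell\)-power isogenous variety carrying the rational torsion. Your additional bookkeeping makes explicit a step the paper only asserts (``\(\mathcal{L}'\) is \(T_\lambda(A')\) for some \(A'\)''): you saturate \(T_\ell(A)\) to an \(\mathcal{O}_{F_\ell}\)-module, extend \(M, M'\) to \(G_K\)-stable lattices in \(V_\ell(A)\) using the decomposition over \(\mu\mid\ell\), and then check that the torsion lies on the variety attached to \(\mathcal{L}'\).
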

\begin{proof}
    With the notation of Theorem \ref{thm:Katz}, let \(L=F_\lambda\), so that \(\pi=\lambda\), \(V=V_\lambda (A)\) and \(G=\rho_\lambda(G_K)\). The condition \(\det(1-g)\equiv 0 \pmod {\lambda^n}\) for all \(g \in G\) is equivalent to \(P_\p^{\lambda}(1) \equiv 0 \pmod{\lambda^n}\) for a density one set of \(\p\), by \v Cebotarev Density Theorem. Then, by Theorem \ref{thm:Katz}, there exist two sublattices \(\mathcal{L'} \subseteq \mathcal{L}\) of \(V\), such that \(G_K\) acts trivially on the quotient. Such quotient has order \(\ell^{\f_\lambda n}\), indeed in the basis \(\{e_1,e_2\}\) given by the theorem, \(\mathcal{L} \cong \mathcal{O}_{F_\lambda} e_1 + \mathcal{O}_{F_\lambda} e_2\); \(\mathcal{L}' \cong \mathcal{O}_{F_\lambda} \lambda^a e_1 + \mathcal{O}_{F_\lambda} \lambda^b e_2\), with \(a+b=n\), and the quotient is
    \[
    \mathcal{L}/\mathcal{L}' \cong \dfrac{\mathcal{O}_{F_\lambda} e_1 + \mathcal{O}_{F_\lambda} e_2}{\mathcal{O}_{F_\lambda} \lambda^a e_1 + \mathcal{O}_{F_\lambda} \lambda^b e_2} \cong \dfrac{{\mathcal{O}_{F_\lambda}}}{\mathcal{O}_{F_\lambda} \lambda^a} \oplus \dfrac{\mathcal{O}_{F_\lambda}}{\mathcal{O}_{F_\lambda} \lambda^b},
    \]
    which has size \(|\mathbb{F}_\lambda|^{a+b} = \ell^{\f_\lambda n}\).

    Now, the lattice \(\mathcal{L}'\) is \(T_\lambda (A')\) for some abelian variety \(A'\) with \(\lambda\)-adic Galois representation \(\rho_\lambda\). In particular, \(A'\) is \(K\)-isogenous to \(A\) with an isogeny of degree a power of \(\ell\). Therefore, \(\Tors (A')(K)\) contains a subgroup of order \(\ell^{\f_\lambda n}\).
\end{proof}

We remark that our assumption on the congruence of \(P_\p^\lambda(1)\) modulo \(\lambda^{n}\) implies the hypothesis in Katz's problem on the number of points of the reduction of \(A\) at \(\p\). 
In fact, as already mentioned, the condition \(N(\p) \equiv 0 \pmod{\ell^n}\) is equivalent to \(P_\p^{\ell}(1)= \det(1-\rho_\ell(\Frob_\p)) \equiv 0 \pmod {\ell^n }\), since \(N(\p)=P_\p^\ell(1)\). By \cite[\S 11.9-11.10]{Shimura1967AlgebraicGroups}, the characteristic polynomial of \(\rho_\ell(\Frob_\p)\) is equal to the norm from \(F\) to \(\Q\) of the characteristic polynomial of \(\rho_{\lambda}(\Frob_\p)\), for any \(\lambda\) over \(\ell\). Then, it is a simple exercise to show that
\begin{equation} \label{eq:valuations}
    v_{\ell}(P_\p^{\ell}(1)) = v_{\ell}(\mathcal{N}_{F/\Q}(P_\p^{\lambda}(1))) = \sum_{\lambda' \mid \ell} v_{\lambda'}(P_\p^\lambda(1)) \e_{\lambda'}\f_{\lambda'},
\end{equation}
where the sum varies over all primes \(\lambda'\) lying over \(\ell\) and \(\e_{\lambda'}\) and \(\f_{\lambda'}\) are the ramification and inertia degree of \(\lambda'\), respectively.

If \(P_\p^\lambda(1) \equiv 0 \pmod {\lambda^n}\) then \(v_{\lambda}(P_\p^\lambda(1)) \geq n\) and so \(v_{\ell}(P_\p^{\ell}(1))\) is at least \(n\e_\lambda \f_{\lambda}\). In particular, \(P_\p^{\ell}(1) \equiv 0 \pmod{\ell^n}\).

\begin{corollary}\label{cor:main}
    Let \(A\) be an abelian variety of \(\GL_2\)-type over a number field \(K\). Let \(\Sigma\) be the set of primes \(\p\) of \(K\) of good reduction for \(A\) such that the absolute ramification index of \(\p\) is smaller than \(p-1\), where \(p\) is the rational prime under \(\p\). Then, the two integers
    \[
    \sup_{A'\text{ K-isogenous to }A} |\Tors(A')(K)|\qquad \text{ and } \qquad \gcd_{\p \in \Sigma} N(\p)
    \]
    are divisible by the same primes \(\ell\) of good reduction for \(A\) (i.e. such that, for all places of \(K\) above \(\ell\), \(A\) has good reduction modulo such places). More precisely, for any rational prime \(\ell\) of good reduction for \(A\), one has
    \[
    v_\ell\left(\sup_{A'\text{ K-isogenous to }A} |\Tors(A')(K)|\right)\le v_\ell\left(\gcd_{\p \in \Sigma} N(\p)\right).
    \]
    Moreover, if \(\ell\) is any such prime and it is totally inert in \(F\), then
    \[
    v_\ell\left(\sup_{A'\text{ K-isogenous to }A} |\Tors(A')(K)|\right)=v_\ell\left(\gcd_{\p \in \Sigma} N(\p)\right).
    \]
\end{corollary}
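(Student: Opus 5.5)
The inequality is the standard injectivity argument, applied uniformly over the isogeny class; the equality in the inert case is Theorem \ref{thm:main_new} with \(\f_\lambda=g\), combined with \eqref{eq:valuations}.

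\textbf{The inequality.} Fix a rational prime \(\ell\) of good reduction for \(A\) and let \(A'\) be \(K\)-isogenous to \(A\). Then \(A'\) has the same places of good reduction as \(A\), and \(|\tilde A'(\F_\p)|=N(\p)\) for every \(\p\in\Sigma\), since isogenous varieties share the Frobenius characteristic polynomial \(P_\p^\ell\) and \(N(\p)=P_\p^\ell(1)\). For \(\p\in\Sigma\), the hypothesis \(e(\p/p)<p-1\) lets us invoke the classical injectivity of reduction on torsion (via the formal group, as in Katz's appendix or Serre--Tate). This gives an injection \(\Tors(A')(K)\hookrightarrow\tilde A'(\F_\p)\) on the full torsion group, including its \(p\)-part. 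Hence \(|\Tors(A')(K)|\) divides \(N(\p)\) for all \(\p\in\Sigma\), so it divides \(\gcd_{\p\in\Sigma}N(\p)\). In particular this gcd is a positive integer bounding \(|\Tors(A')(K)|\) uniformly, so the supremum is finite. Taking \(\ell\)-adic valuations and the supremum over \(A'\) gives the inequality, and hence that every prime dividing the supremum divides the gcd.

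\textbf{Equality when \(\ell\) is totally inert.} Let \(\lambda=\ell\mathcal O_F\), so \(\e_\lambda=1\) and \(\f_\lambda=g\). Set \(m=v_\ell(\gcd_{\p\in\Sigma}N(\p))\). Then \(v_\ell(P_\p^\ell(1))\ge m\) for all \(\p\in\Sigma\). Since \(\lambda\) is the only prime above \(\ell\), \eqref{eq:valuations} reads \(v_\ell(P_\p^\ell(1))=g\,v_\lambda(P_\p^\lambda(1))\). Thus \(v_\lambda(P^\lambda_\p(1))\ge\lceil m/g\rceil=:n\).

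The set \(\Sigma\) has density one: it excludes only finitely many bad places and the primes above small \(p\) with large ramification, and for large \(p\) every place has \(e(\p/p)\le[K:\Q]<p-1\). Theorem \ref{thm:main_new} then produces \(A'\), \(\ell\)-power isogenous to \(A\), with \(\ell^{gn}\mid|\Tors(A')(K)|\). Since \(gn\ge m\), the supremum has \(\ell\)-valuation at least \(m\), and together with the inequality this gives equality. (Incidentally, \(m=gn\) exactly in this case.)

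\textbf{Main obstacle.} The delicate step is the \(p\)-part of the injectivity in the first argument. This is exactly what the ramification condition defining \(\Sigma\) is for, and I would cite the standard formal group lemma rather than reprove it. I also need to check that the first argument does not really require \(\ell\) to be of good reduction. That hypothesis is only needed for the converse direction, if anywhere, so I would simply carry it along as in the statement.
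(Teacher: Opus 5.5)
Your injectivity argument for \(\sup_{A'}|\Tors(A')(K)|\mid\gcd_{\p\in\Sigma}N(\p)\) and your equality argument for totally inert \(\ell\) match the paper's proof. In the inert case you make explicit something the paper leaves implicit: taking \(n=\lceil m/g\rceil\) with \(m=v_\ell(\gcd_{\p\in\Sigma}N(\p))\) is what makes the congruence modulo \(\lambda^n\) hold for \emph{all} \(\p\in\Sigma\), as Theorem~\ref{thm:main_new} requires.

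However, the proposal does not prove the first assertion of the statement. ``Divisible by the same primes'' needs both implications. You prove \(\ell\mid\sup\Rightarrow\ell\mid\gcd\) in general, but the reverse only when \(\ell\) is totally inert in \(F\). For \(\ell\) split or ramified in \(F\), nothing in your argument produces an isogenous \(A'\) with \(\ell\mid|\Tors(A')(K)|\) from \(\ell\mid\gcd_{\p\in\Sigma}N(\p)\). Your last paragraph mentions ``the converse direction'' but never carries it out.

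The paper covers this case with \eqref{eq:valuations}. Since \(0<v_\ell(N(\p))=\sum_{\lambda'\mid\ell}\e_{\lambda'}\f_{\lambda'}\,v_{\lambda'}(P_\p^{\lambda'}(1))\), some \(\lambda_0\mid\ell\) has \(v_{\lambda_0}(P_\p^{\lambda_0}(1))\ge 1\). Theorem~\ref{thm:main_new} at \(\lambda_0\) with \(n=1\) then gives \(A'\) with \(\ell^{\f_{\lambda_0}}\mid|\Tors(A')(K)|\).

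If you add this step, be aware of what it actually needs. Theorem~\ref{thm:main_new} requires a \emph{single} \(\lambda_0\) for which the congruence holds on a density-one set of \(\p\). Equation \eqref{eq:valuations} only gives a \(\lambda_0\) that depends on \(\p\), and pigeonhole over the finitely many \(\lambda\mid\ell\) only yields a set of positive density. This uniformity is not automatic. For \(\ell\) odd, every element of \(\{(\mathrm{diag}(\epsilon_1,\epsilon_2),\,\epsilon_1\epsilon_2 I):\epsilon_i=\pm1\}\subset\GL_2(\F_\ell)\times\GL_2(\F_\ell)\) has eigenvalue \(1\) in some factor, yet neither projection has this property. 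The paper's proof passes over this dependence on \(\p\). Choosing \(\lambda_0\) independently of \(\p\) is therefore the step you would have to justify to cover non-inert \(\ell\).
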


\begin{proof}
First, let \(A'\) be an abelian variety that is \(K\)-isogenous to \(A\). By injectivity of the reduction map, which holds for all primes in \(\Sigma\) (see \cite[Appendix]{Katz1981GaloisVarieties}), \(\Tors(A')(K)\) injects into the \(\F_\p\)-points of \(A'\) for all \(\p \in \Sigma\), thus \(|\Tors(A')(K)|\) divides \( |A'(\F_\p)|=N(\p)\). Taking the sup over all \(A'\) that are \(K\)-isogenous to \(A\) and the gcd of all \(N(\p)\) as \(\p\) varies in \(\Sigma\), we obtain that
\[
    \sup_{A' K\text{-isogenous to} A} |\Tors(A')(K)| \text{ divides } \gcd_{\p \in \Sigma} N(\p).
    \]
 
Now, let \(\ell\) be a prime of good reduction for \(A\), dividing all \(N(\p)\) for \(\p \in \Sigma\), except those lying above \(\ell\). As shown in Equation \eqref{eq:valuations}, for every such \(\p\),
\begin{equation} 
    v_{\ell}(P_\p^{\ell}(1)) = \sum_{\lambda' \mid \ell} v_{\lambda'}(P_\p^\lambda(1)) \e_{\lambda'}\f_{\lambda'}=\sum_{\lambda' \mid \ell} v_{\lambda'}(P_\p^{\lambda'}(1)) \e_{\lambda'}\f_{\lambda'}.
\end{equation}

The last equality follows from the compatibility of the representations \(\{\rho_\lambda\}_\lambda\), which holds in our setting by \cite[Theorem 2.1.2]{Ribet1976GaloisMultiplications}. Since this quantity is positive, there exists \(\lambda_0\) lying over \(\ell\) with \(v_{\lambda_0}(P_\p^{\lambda_0}(1)) =n> 0\). By Theorem \ref{thm:main_new}, there exists an abelian variety \(A'\) that is \(K\)-isogenous to \(A\) and such that \(v_\ell(|\Tors(A')(K)|) \ge n \e_{\lambda_0} \f_{\lambda_0} > 0\), concluding the proof of the first statement.

Now, suppose that \(\lambda_0\) is the unique prime lying over \(\ell\), and that \(\ell\) is totally inert, i.e. \(\e_{\lambda_0}=1\). It is enough to show that there exists an abelian variety \(A'\) that is \(K\) isogenous to \(A\) and such that \(v_\ell(\gcd N(\p)) \le  v_\ell(|\Tors(A')(K)|)\). We take \(A'\) as in the previous step of the proof. Then
\[
v_\ell(N(\p))=\sum_{\lambda' \mid \ell} v_{\lambda'}(P_\p^{\lambda'}(1)) \e_{\lambda'}\f_{\lambda'} = v_{\lambda_0}(P_\p^{\lambda_0}(1))\e_{\lambda_0}\f_{\lambda_0}=n \f_{\lambda_0}.
\]

So, \(v_\ell(\gcd N(\p)) \le n \f_{\lambda_0} \le v_\ell(|\Tors(A')(K)|)\).
\end{proof}

\begin{remark}
    Notice that the set \(\Sigma\) is infinite, since all but finitely many primes are of good reduction and unramified (so in particular \(\e_p=1\)). However, in order to compute \(\gcd_{\p \in \Sigma} N(\p)\), only finitely many of these primes will be sufficient. See Lemma \ref{lem:sturm} for more details when \(K=\Q\).
\end{remark}

\section{Modular abelian varieties defined over \texorpdfstring{$\Q$}{Q}}\label{sec:app}

We now specialise the result of Theorem \ref{thm:main_new} and Corollary \ref{cor:main} to simple \(\GL_2\)-type abelian varieties defined over the rational numbers. Let \(A\) be an abelian variety of dimension \(g\) defined over \(\Q\). Let \(\End^0_\Q (A) = \End_\Q(A) \otimes_\Z \Q\) denote the algebra of endomorphisms of \(A\) that are defined over \(\Q\), and let \(\End_{\overline{\Q}}^0(A) = \End_{\overline{\Q}} (A \times_\Q \overline{\Q}) \otimes_\Z \Q\) denote the full endomorphism algebra.

By \cite[p.\ 193]{Mumford1970AbelianVarieties}, if \(A\) is a geometrically simple abelian variety, both \(\End^0_\Q (A)\) and \(\End^0_{\overline \Q} (A)\) are Albert algebras. By \cite[Remark p.\ 182]{Mumford1970AbelianVarieties}, the dimensions of \(\End^0_\Q (A)\) and of \(\End_{\overline{\Q}}^0(A)\) divide \(2g\) and by \cite[\S 2]{Ribet2004AbelianForms}, if \(\End_\Q^0(A)\) contains a number field, then its degree divides \(g\).
In particular, by Albert's classification \cite[p.\ 201]{Mumford1970AbelianVarieties}, both endomorphism algebras are of one of the following types:
\begin{itemize}
    \item \textbf{Type I}: a totally real number field of degree dividing \(g\);
    \item \textbf{Type II or III}: a quaternion algebra (either totally definite or totally indefinite) over a totally real number field of degree dividing \(g/2\);
    \item \textbf{Type IV}: a division algebra of dimension \(d^2\) over a CM field of degree dividing \(2g/d^2\).
\end{itemize}

We now investigate the possible geometric behaviours of simple abelian varieties \(A\) of \(\GL_2\)-type over \(\Q\). As in \S \ref{sec:setting}, we write \(F \hookrightarrow \End_\Q^0(A)\), where \(F\) is a degree \(g\) number field. By \cite[Theorem 2.1]{Ribet2004AbelianForms}, \(F=\End_\Q^0(A)\).

\begin{proposition}
    Let \(A\) be a simple abelian variety of dimension \(g \ge 2\), defined over \(\Q\), of \(\GL_2\)-type. Then one of the following occurs:
    \begin{itemize}
        \item \(A\) is not geometrically simple;
        \item \(A\) is geometrically simple and has potential quaternionic multiplication (or PQM for short);
        \item \(A\) is geometrically simple, with \(\End^0_{\overline{\Q}}(A) = \End^0_\Q(A)\) equal to a totally real number field of degree \(g\).
    \end{itemize}
\end{proposition}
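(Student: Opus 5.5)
We argue by cases. We may assume \(A\) is geometrically simple, since otherwise the first alternative holds. We then determine \(D:=\End^0_{\overline{\Q}}(A)\) using Albert's classification as recalled above, together with two facts. First, \(F=\End^0_\Q(A)\subseteq D\) is a number field of degree \(g\), by \cite[Theorem 2.1]{Ribet2004AbelianForms}. Second, \(A\) is defined over \(\Q\), which has a real place. The main tool for the second fact is Ribet's observation \cite[\S 2]{Ribet2004AbelianForms} that a simple \(\GL_2\)-type variety over \(\Q\) has \(F\) totally real. The argument is that the Rosati involution preserves \(F=\End^0_\Q(A)\), and the compatible system \(\rho_\lambda\) has determinant given by the cyclotomic character times a finite-order character. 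Hence the traces \(a_p\) of Frobenius satisfy \(\bar a_p=\epsilon(p)^{-1}a_p\), and over \(\Q\) the real place forces \(F\) to be totally real.

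The key constraint is that a subfield of \(D\) has degree at most the maximal commutative subfield size. Since \(F\subseteq D\) has degree \(g\), and \(\dim_\Q D\) divides \(2g\) for a simple \(g\)-dimensional variety, we go through the types.

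In Type I, \(D\) is a totally real field of degree dividing \(g\) and containing \(F\), hence \(D=F\). This is the third alternative.

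In Type II or III, \(D\) is a quaternion algebra over a totally real field \(Z\) with \([Z:\Q]\mid g/2\). Its maximal subfields have degree \(2[Z:\Q]\le g\), so \(F\) is a maximal subfield and \([Z:\Q]=g/2\). Thus \(\dim D=2g\), which is the PQM case (second alternative). If one insists on the definition of PQM as an order in a division quaternion algebra, Type III must be excluded. I would use the standard fact \cite{Ribet2004AbelianForms} that for \(\GL_2\)-type over \(\Q\) only indefinite quaternion algebras occur, because the Galois action on \(D\) acts on \(F\) compatibly with the inner twists.

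In Type IV, \(D\) is a division algebra of dimension \(d^2\) over a CM field \(E\) with \(d^2[E:\Q]\mid 2g\). A maximal subfield has degree \(d[E:\Q]\ge g\). The case \(d=1\) is impossible: then \(D=E\) is commutative of degree \(\ge g\) and dividing \(2g\). Either \(E=F\), which contradicts \(F\) totally real, or \([E:\Q]=2g\), so \(A\) has CM over \(\overline{\Q}\). But a geometrically simple CM abelian variety with \(g\ge2\) cannot have \(\End_\Q^0\) containing \(F\) with the whole of \(E\) defined over a field over which \(F\) is the fixed part. More concretely, I would use that CM abelian varieties of \(\GL_2\)-type over \(\Q\) are geometrically isogenous to powers of CM elliptic curves (Ribet), contradicting geometric simplicity when \(g\ge2\). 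The case \(d\ge2\) gives \(d^2[E:\Q]\ge 2d[E:\Q]\ge 2g\), forcing \(d=2\) and \([E:\Q]=g/2\). This is a quaternion algebra over a CM field, and Shimura's classification of admissible endomorphism algebras excludes it for simple abelian varieties. Alternatively, if its centre were CM, the Galois action would conjugate \(F\) into non-real elements.

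The Type IV exclusion, especially the CM case, is the main obstacle. I would rely on Ribet's results on \(\GL_2\)-type varieties over \(\Q\) (their geometric endomorphism algebras are generated by inner twists and have totally real centre) to dispose of it cleanly.
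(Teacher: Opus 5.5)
There is a genuine gap in your treatment of Type IV. Your premise that \(F=\End^0_\Q(A)\) is totally real is false. The argument of \cite[\S 2]{Ribet2004AbelianForms} (the Rosati involution restricts to a positive involution of \(F\)) only shows that \(F\) is totally real \emph{or CM}. Your own relation \(\overline{a_p}=\epsilon(p)^{-1}a_p\) shows that \(a_p\) is non-real whenever \(\epsilon(p)\neq 1\) and \(a_p\neq 0\), so the real place of \(\Q\) forces nothing. For example, \(J_1(13)\) is a simple abelian surface of \(\GL_2\)-type over \(\Q\) with \(F=\Q(\sqrt{-3})\); its newform has a character of order \(6\).

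This breaks your exclusion of the subcase \(d=1\), \(D=E=F\), with \(F\) CM. The subcase \(d=2\) fails for the same reason. There \(FE\) is a subfield of \(D\) containing \(F\) of degree at most \(d[E:\Q]=g\), so \(E\subseteq F\) and \(F\) is CM. This is contradictory only under the false premise, and your alternative about ``conjugating \(F\) into non-real elements'' presupposes it too. Nor does Shimura's classification by itself forbid a quaternion division algebra over a CM field as the endomorphism algebra of a simple abelian variety in characteristic \(0\). The obstruction here is arithmetic, coming from \(\GL_2\)-type over \(\Q\).

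What closes the gap is the input you relegate to your last sentence, and it is the whole of the paper's proof. By the results of Ribet cited there, the centre \(Z\) of \(\End^0_{\overline{\Q}}(A)\) is the subfield of \(F\) fixed by the inner twists, and it is totally real. Indeed, when \(F\) is CM, your relation \(\overline{a_p}=\epsilon(p)^{-1}a_p\) says exactly that complex conjugation is itself an inner twist, so \(Z\) lies in the maximal totally real subfield of \(F\). A Type IV algebra has CM centre, so Type IV disappears at once, without any case analysis on \(d\). The geometrically CM situation is excluded, as you say, because such \(A\) is geometrically isogenous to a power of a CM elliptic curve. Your degree counts for Types I and II/III then give the trichotomy. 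Incidentally, Type III needs no special exclusion: a totally definite quaternion algebra over a totally real field is a division algebra, so that case would still be PQM.
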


\begin{proof}
Let us assume that \(A\) is geometrically simple, so that \(\End^0_{\overline{\Q}} (A)\) is an Albert algebra. In particular, it is either a division algebra or a field. Let \(Z\) be its center. By \cite[Proposition 5.1, Proposition 5.2, Theorem 5.3 and Proposition 3.6]{Ribet2004AbelianForms}, \(Z\) is totally real, contained in \(\End^0_{\Q} (A)\), and \(\End^0_{\overline{\Q}} (A)\) is either a quaternion algebra over \(Z\) (PQM case), or it is equal to \(Z\), in which case \(Z= \End^0_{\overline{\Q}} (A)= \End^0_{\Q} (A)\) as claimed.
\end{proof}

In the non-geometrically simple case, the study of \(\Tors(A)\) may be reduced to understanding the torsion of smaller dimensional abelian varieties, albeit defined over larger number fields. In the PQM case, for \(A\) of dimension \(2\), \cite{Laga2024RationalMultiplication} classifies all possible torsion subgroups. These first two cases are somewhat special: the generic case is that of \(\End^0_{\overline{\Q}}(A) = \End^0_\Q(A)\). Nonetheless, for what follows, we need not distinguish among these cases.

By \cite[Theorem 4.4]{Ribet2004AbelianForms} and Serre's modularity conjectures \cite{Serre1987SurmathrmGaloverlinemathbfQ/mathbfQ} (proved by Khare--Winterberger, \cite{Khare2009Serresi,Khare2009Serresii}), a simple abelian variety \(A/\Q\) of \(\GL_2\)-type is modular, hence it is isogenous to the abelian variety \(A_f\) associated to some newform \(f\), with weight \(2\) and Nebentypus \(\chi\), and coefficient field equal to \(F=\End^0_\Q(A_f)\). The definition of \(A_f\) is  in \cite[Definition 6.6.3]{Diamond2005AForms}). Such an abelian variety has \(\lambda\)-adic Galois representation isomorphic to that arising from the newform \(f\), which we denote by \(\rho_{f,\lambda}\). For the construction see \cite[\S 9.5]{Diamond2005AForms}. By \cite[Theorem 9.6.5]{Diamond2005AForms}, the representations \((\rho_{f,\lambda})\) form a strictly compatible system for all \(\lambda\)'s, \emph{including those of bad reduction for \(A_f\)}, with characteristic equation of the Frobenius attached to the rational prime \(p\) as follows:
\[
P_p^\lambda(t) = 1 - a_p(f)t + \chi(p) p t^2,
\]
where \(a_p(f)\) is the \(p\)-th Fourier coefficient of \(f\).


This discussion allows us to restate Theorem \ref{thm:main_new} and Corollary \ref{cor:main} in the following form. The proofs are identical to those in \S \ref{sec:main}, except that we can drop the hypothesis on good reduction of \(A_f\) at \(\ell\), thanks to the compatibility of the \(\rho_{f,\lambda}\)'s.

\begin{theorem}\label{thm_mainQ}
    Let \(A/\Q\) be an abelian variety of \(\GL_2\)-type, such that \(A\) is isogenous to \(A_f\) for a modular form \(f\) with totally real coefficients field \(F\). Let \(\ell\) be a rational prime and \(\lambda\) be a prime of \(F\) lying over \(\ell\). Suppose that, for an integer \(n \ge 1\), the congruence
    \[
    \det(1-\rho_{f,\lambda}(\Frob_p)) \equiv 0 \pmod{\lambda^n}
    \]
    holds for a density one set of primes \(p\). Then, there exists an abelian variety \(A'\) that is \(\Q\)-isogenous to \(A\) through a \(\ell\)-power isogeny, such that
    \[
    |\Tors(A')(\Q)| \equiv 0 \pmod{\ell^{f_\lambda n}},
    \]
    where \(f_{\lambda}\) is the inertia degree of \(\lambda \mid \ell\).
\end{theorem}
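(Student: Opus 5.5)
The plan is to repeat the proof of Theorem \ref{thm:main_new} with \(K=\Q\), \(L=F_\lambda\) and \(V=V_\lambda(A_f)\). The representation on \(V\) is isomorphic to \(\rho_{f,\lambda}\), and we take \(G=\rho_{f,\lambda}(G_\Q)\). The only new point is that the hypothesis now concerns \(\rho_{f,\lambda}\) and \(\ell\) may be a prime of bad reduction for \(A\). So we must justify two things: passing from a density one set of Frobenius elements to all of \(G\), and identifying the lattices with Tate modules.

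\textbf{Reduction to \(A_f\).} Since \(A\) is \(\Q\)-isogenous to \(A_f\), the conclusion should be phrased for the isogeny class, which is the same for both. I will work with \(A_f\), whose \(\lambda\)-adic representation is \(\rho_{f,\lambda}\). One caveat is that the isogeny \(A\to A_f\) need not be an \(\ell\)-power isogeny. In that case I will apply the construction to \(A\) itself: \(V_\lambda(A)\cong V_\lambda(A_f)\) as \(G_\Q\)-modules, so \(\rho_{f,\lambda}\) also describes the action on \(V_\lambda(A)\). The stable lattices below then live inside \(V_\lambda(A)\), and the isogeny we build out of \(A\) will be of \(\ell\)-power degree.

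\textbf{Density step.} The function \(g\mapsto \det(1-g)\) is continuous on the compact group \(G\), and \(\{x\in F_\lambda : v_\lambda(x)\ge n\}\) is closed. Hence the set of \(\gamma\in G_\Q\) with \(\det(1-\rho_{f,\lambda}(\gamma))\equiv 0 \pmod{\lambda^n}\) is closed. By Čebotarev, the Frobenius elements \(\Frob_p\) for \(p\) in any density one set are dense in \(G_\Q\), after discarding the finitely many ramified primes. So the congruence holds for all \(g\in G\).

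\textbf{Lattices and the torsion subgroup.} Theorem \ref{thm:Katz} now gives a basis \(e_1,e_2\) and integers \(a+b=n\). These produce \(G_\Q\)-stable \(\mathcal O_{F_\lambda}\)-lattices \(\mathcal L'\subseteq\mathcal L\) with trivial action on the quotient \(\mathcal L/\mathcal L'\), whose order is \(\ell^{f_\lambda n}\). I then need to realise \(\mathcal L'\) as a Tate module. Write \(T_\ell(A)=\bigoplus_{\lambda'|\ell}T_{\lambda'}(A)\) and replace the \(\lambda\)-component by a suitable multiple of \(\mathcal L'\) contained in \(T_\lambda(A)\), keeping the other components. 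This gives a \(G_\Q\)-stable \(\Z_\ell\)-lattice of finite \(\ell\)-power index in \(T_\ell(A)\).

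By the standard correspondence between finite Galois-stable subgroups and lattices, such a lattice is \(T_\ell(A'')\) for an \(\ell\)-power isogeny \(A''\to A\). This correspondence works over any field of characteristic \(0\), so it is not affected by bad reduction at \(\ell\). Taking \(A'\) to be the variety attached to \(\mathcal L'\), the group \(\mathcal L/\mathcal L'\) embeds Galois-equivariantly into \(A'[\ell^\infty]\), via \(\mathcal L\subseteq \ell^{-k}\mathcal L'\). It therefore consists of rational points, which gives \(\ell^{f_\lambda n}\mid|\Tors(A')(\Q)|\).

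\textbf{Main obstacle.} The delicate step is the density step when \(\ell\) is a prime of bad reduction. The characteristic polynomial \(1-a_pt+\chi(p)pt^2\) must agree with \(\det(1-t\rho_{f,\lambda}(\Frob_p))\) for all \(p\nmid N\ell\). This is exactly the strict compatibility from \cite[Theorem 9.6.5]{Diamond2005AForms}, which I will invoke there. The lattice-to-isogeny step needs only that \(\mathcal L'\) is \(G_\Q\)-stable, so it also avoids any use of good reduction.
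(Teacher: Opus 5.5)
Your proposal is correct and follows the paper's route: the paper reruns the proof of Theorem \ref{thm:main_new} with \(K=\Q\), \(L=F_\lambda\) and \(G=\rho_{f,\lambda}(G_\Q)\), using \v{C}ebotarev to pass to all of \(G\), Theorem \ref{thm:Katz} to produce \(\mathcal L'\subseteq\mathcal L\), and then realising \(\mathcal L'\) as the Tate module of an \(\ell\)-power isogenous variety; you add more careful detail on the closedness argument, the lattice splicing, and working with \(A\) rather than \(A_f\). One small remark: since the hypothesis is already stated for \(\det(1-\rho_{f,\lambda}(\Frob_p))\), the density step does not depend on the reduction type at \(\ell\) and needs no compatibility statement --- compatibility only enters when the hypothesis is translated into the \(a_p\)'s, as in Corollary \ref{cor:mainQ}.
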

\begin{corollary}\label{cor:mainQ}
    Let \(A\sim A_f\) be as above. Let \(\Sigma\) be the set of primes of good reduction for \(A\). Then, the two integers
    \[
    \sup_{A'\, \Q\text{-isogenous to }A_f} |\Tors(A')(\Q)|\quad \text{and} \quad \gcd_{p \in \Sigma} N(p)
    \]
    are divisible by the same primes. More precisely, for any rational prime \(\ell\), one has
    \[
    v_\ell\left(\sup_{A'\, \Q\text{-isogenous to }A} |\Tors(A')(\Q)|\right)\le v_\ell\left(\gcd_{p \in \Sigma} N(p)\right).
    \]
    Moreover, if \(\ell\) is any such prime and it is inert in \(F\), then
    \[
    v_\ell\left(\sup_{A'\, \Q\text{-isogenous to }A_f} |\Tors(A')(\Q)|\right)=v_\ell\left(\gcd_{p \in \Sigma} N(p)\right).
    \]
\end{corollary}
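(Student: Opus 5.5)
The proof will follow the proof of Corollary \ref{cor:main}, with Theorem \ref{thm_mainQ} used in place of Theorem \ref{thm:main_new}. Over \(K=\Q\) every prime \(p\) has absolute ramification index \(1\), so the condition \(e<p-1\) holds for all \(p\ge 3\). By Katz's appendix, the reduction map is injective on \(\Tors(A')(\Q)\) for every good prime \(p\ge 3\), and it is also injective at \(p=2\) on the prime-to-\(2\) part. Alternatively, one can discard \(p=2\): \(\gcd_{p\in\Sigma}N(p)\) and the gcd over \(p\in\Sigma\setminus\{2\}\) are both multiples of \(|\Tors(A')(\Q)|\) by the argument below, and only valuations matter. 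Take any \(A'\) that is \(\Q\)-isogenous to \(A_f\). An isogeny does not change \(N(p)\), since \(N(p)=P_p^\ell(1)\) depends only on the isogeny class. Hence \(|\Tors(A')(\Q)|\) divides \(N(p)\) for all good \(p\) at which reduction is injective, and taking the sup gives the inequality of \(\ell\)-adic valuations. The one point to check carefully is \(\ell=p=2\). Here I would use the fact that \(\Tors(A')(\Q)\) injects into \(\tilde A'(\F_p)\) for any odd good \(p\), so the \(2\)-part is already controlled by odd primes.

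For the reverse direction, let \(\ell\) be any prime with \(v_\ell(N(p))>0\) for all \(p\in\Sigma\) except possibly \(p=\ell\). Because \(\Sigma\setminus\{\ell\}\) has density one, Čebotarev together with continuity of \(\rho_{f,\ell}\) gives \(\det(1-\rho_{f,\ell}(g))\equiv 0\pmod \ell\) for all \(g\in G_\Q\). By \eqref{eq:valuations} and compatibility of the \(\rho_{f,\lambda}\) (Diamond--Shurman, valid for all \(\lambda\), including bad ones), for each \(p\) some \(\lambda'\mid\ell\) has \(v_{\lambda'}(1-a_p+\chi(p)p)>0\). The main obstacle is that this \(\lambda'\) might depend on \(p\). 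To deal with this I would argue by contradiction. If for every \(\lambda'\mid\ell\) the set of \(g\in G_\Q\) with \(\det(1-\rho_{f,\lambda'}(g))\not\equiv0\) were nonempty, then each such set is open. I would then try to find a single \(g\) lying in all of them simultaneously, using the product representation \(\prod_{\lambda'}\rho_{f,\lambda'}=\rho_{f,\ell}\) and the structure of its image. That \(g\) would give \(v_\ell(N)=0\) on a positive-density set of Frobenii, which is a contradiction. If this step resists, I would fall back on the totally inert case, where \(\lambda_0\) is unique and the issue disappears, and then get a positive \(\ell\)-valuation of the torsion directly.

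Once a fixed \(\lambda_0\) and \(n\ge1\) are found with \(1-a_p+\chi(p)p\equiv0\pmod{\lambda_0^n}\) for a density one set of \(p\), Theorem \ref{thm_mainQ} gives \(A'\) with \(\ell^{\f_{\lambda_0}n}\mid|\Tors(A')(\Q)|\). This proves that the same primes divide both integers. In the inert case \(\lambda_0=\ell\mathcal O_F\) has \(\e=1\) and \(\f=g\), so \eqref{eq:valuations} reads \(v_\ell(N(p))=\f_{\lambda_0}v_{\lambda_0}(P_p^{\lambda_0}(1))\). Choose \(n=\min_p v_{\lambda_0}(P_p^{\lambda_0}(1))\), taken over the density-one set. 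Then \(v_\ell(\gcd N(p))=\f_{\lambda_0}n\le v_\ell(|\Tors(A')(\Q)|)\). Combined with the inequality from the first paragraph, this gives equality.
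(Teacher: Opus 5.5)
Your outline follows the paper's own route: the paper proves Corollary \ref{cor:mainQ} by rerunning the proof of Corollary \ref{cor:main} with Theorem \ref{thm_mainQ}. You also correctly spot where that proof is thin. The paper takes \(\lambda_0\) with \(v_{\lambda_0}(P_\p^{\lambda_0}(1))>0\) from a single \(\p\) and then applies Theorem \ref{thm:main_new}, which needs one fixed \(\lambda\) for a density-one set of places.

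However, you do not close this gap, and the strategy you sketch cannot close it in general. The nonempty open sets \(U_{\lambda'}=\{g:\det(1-\rho_{f,\lambda'}(g))\not\equiv 0 \bmod \lambda'\}\) can have empty intersection. Suppose \(\ell\) is odd with exactly three primes above it, and
\[
\bar{\rho}_{f,\lambda_1}^{\mathrm{ss}}\cong\alpha\oplus\alpha\epsilon,\qquad \bar{\rho}_{f,\lambda_2}^{\mathrm{ss}}\cong\beta\oplus\beta\epsilon,\qquad \bar{\rho}_{f,\lambda_3}^{\mathrm{ss}}\cong\alpha\beta\oplus\alpha\beta\epsilon,
\]
where \(\alpha\neq\beta\) are nontrivial quadratic characters unramified at \(\ell\) and \(\epsilon\) is the mod \(\ell\) cyclotomic character. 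Since \(\alpha(g)\beta(g)(\alpha\beta)(g)=1\), every \(g\) has eigenvalue \(1\) modulo some \(\lambda_i\). Hence \(\ell\mid N(p)\) for all good \(p\neq\ell\), yet each \(U_{\lambda_i}\) is nonempty.

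Worse, in this situation no \(A'\) isogenous to \(A\) has a rational point of order \(\ell\). Such a point is a trivial Jordan--H\"older constituent of \(A'[\ell]\). By Brauer--Nesbitt, the semisimplification of \(A'[\ell]\) is that of \(\bigoplus_{\lambda\mid\ell}T_\lambda/\ell T_\lambda\). This forces the trivial character into \(\bar{\rho}_{f,\lambda}^{\mathrm{ss}}\) for one fixed \(\lambda\), which does not happen here. Nothing in your argument, or in the paper's, excludes such an \(f\); this is the covering phenomenon behind Katz's higher-dimensional counterexamples.

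So what your proposal actually establishes is the case of a single prime above \(\ell\), which is your fallback. There your choice \(n=\min_p v_{\lambda_0}(P_p(1))\) is right and yields the equality. One small correction: if \(\ell\in\Sigma\), you only get \(v_\ell(\gcd_{p\in\Sigma}N(p))\le\f_{\lambda_0}n\), not equality.

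Separately, your treatment of \(p=2\) in the upper bound is wrong. Prime-to-\(2\) injectivity at \(2\), plus control of the \(2\)-part by odd primes, only gives \(v_2(|\Tors(A')(\Q)|)\le v_2(\gcd_{p\in\Sigma\setminus\{2\}}N(p))\). It does not give \(|\Tors(A')(\Q)|\mid N(2)\). Your claim that \(\gcd_{p\in\Sigma}N(p)\) is a multiple of \(|\Tors(A')(\Q)|\) is in fact false when \(2\in\Sigma\). For the newform \texttt{15.2.a.a} (so \(F=\Q\)), the isogeny class contains \(X_0(15)\colon y^2+xy+y=x^3+x^2-10x-10\), which has good reduction at \(2\) and torsion \(\Z/2\times\Z/4\), while \(N(2)=3-a_2=4\).

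The statement must be read with \(\Sigma\) as in Corollary \ref{cor:main}, i.e.\ with \(e_p<p-1\), which means \(p\neq 2\). This is what the paper's proof and its implementation actually use. With that convention, your first paragraph is fine.
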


\subsection{Implementation and conjectural results}
In \cite{Alessandri2026MagmaVarieties}, we implement in \texttt{Magma} the results of Theorem \ref{thm_mainQ} and Corollary \ref{cor:mainQ} to produce a tentative list of torsion orders for simple modular abelian varieties of dimension \(g\) over \(\Q\). To this end, we access all\footnote{For information on the completeness of the data, see \url{https://www.lmfdb.org/ModularForm/GL2/Q/holomorphic/Completeness}} weight \(2\) newforms with level up to \(10000\) in \texttt{LMFDB} and list, for each, the following data:
\begin{itemize}
    \item the level \(N\);
    \item the Fourier coefficients \(a_p\) for \(p\) prime up to the Sturm bound (see \cite[Corollary 9.20]{Stein2007});
    \item the Nebentypus \(\chi\);
\end{itemize}
We then use these data to compute, for the non-CM forms (in particular, the associated abelian varieties are non-CM), \(P_p(1)=P_p^\lambda(1) = 1 - a_p + p\chi(p)\) for \(p\) up to the Sturm bound and such that \(\e_p < p-1\), so that torsion injects into the reduction modulo \(p\). Recall \(P_p^\lambda(t)\) does not depend on \(\lambda\), so we will drop it from the notation. We consider the factorisation in \(\Z\) of the norm from \(F\) to \(\Q\) of \(P_p(1)\) and, for each rational prime \(\ell\) occurring in the factorisation, we compute the sup, for \(\lambda\) above \(\ell\), of \(\ell^{\f_\lambda n}\), where \(\f_\lambda\) is the inertia degree of \(\lambda\) and \(n\) is the \(\lambda\)-adic valuation of \(P_p(1)\). Multiplying these terms together, we obtain the \emph{predicted torsion order},
i.e.\ the product
\[
\prod_{\ell \text{ prime}} \ell^{\sup_{\lambda \mid \ell} (\f_\lambda n)},
\]
which is the value for \(\displaystyle\sup_{A'\, \Q\text{-isogenous to }A_f} |\Tors(A')(\Q)|\) predicted by Theorem \ref{thm_mainQ}. Moreover, the program checks whether the integer \(\gcd_p N(p)\) (where the g.c.d.\ is over all the primes \(p\) in the aforementioned loop) is equal to the \emph{predicted torsion order}, in which case we know that there cannot be an abelian variety in the same isogeny class with torsion group of a larger size. 
The output of the code is 
\begin{itemize}
    \item the list of all possible torsion orders found in this way, 
    \item the list of the torsion orders which are equal to the corresponding integer \(\gcd_p N(p)\),
    \item the list of primes which occur in the factorisation of at least one of the possible torsion orders found. 
\end{itemize}
The \emph{predicted torsion order} is not always equal to the greatest common divisor of the \(N(p)\)'s, hence Theorem \ref{thm_mainQ} and Corollary \ref{cor:mainQ} cannot be improved to be exactly equivalent to Katz's result for elliptic curves. Nonetheless, this is often the case and, using all the data in \texttt{LMFDB}, the two lists computed by the code are exactly the same for abelian varieties of dimension \(2\), \(3\) and \(5\).
In dimension \(4\), the newform with \texttt{LMFDB} label \texttt{\href{https://www.lmfdb.org/ModularForm/GL2/Q/holomorphic/4830/2/a/ce/}{4830.2.a.ce}} has an associated abelian variety (up to isogeny) with \emph{predicted torsion order} equal to \(88\), and \(\gcd_p N(p) = 176\), but \(88\) does not appear in the list of the \(\gcd_p N(p)\)'s, while \(176\) does appear in the list of predicted torsion orders. It is unclear whether, allowing the code to search for higher level newforms, this will remain an exception (and, if so, the only one) or not.

In the following lemma, we prove that using only the Fourier coefficients \(a_p\) with \(p\) up to the Sturm bound ensures the correctness of the predicted torsion orders.

\begin{lemma}\label{lem:sturm}
    Let \(f\) be a newform of weight \(2\), level \(N\) and Nebentypus \(\chi\). Let \texttt{st} be the Sturm bound for the space of such modular forms. Then, the ideal generated by \(a_p(f) - (1 + \chi(p)p)\) for every \(p\) is the same as the ideal generated by the same elements, for \(p\) up to \texttt{st}.
\end{lemma}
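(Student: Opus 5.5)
The inclusion of the ideal generated by the elements with \(p \le \texttt{st}\) in the ideal generated by all of them is trivial. Write \(J \subseteq \mathcal{O}_F\) for the smaller ideal (generated by \(a_p(f) - (1+\chi(p)p)\) for \(p \le \texttt{st}\)). The content of the lemma is that \(a_p(f) \equiv 1+\chi(p)p \pmod J\) for every prime \(p\).

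First, \(J \neq 0\). Otherwise \(f\) and the weight \(2\) Eisenstein series \(E = E_2^{\mathbf 1,\chi}\), whose \(p\)-th coefficient is \(1+\chi(p)p\), would agree up to the Sturm bound. Sturm's theorem in characteristic \(0\) would then force a cusp form to equal an Eisenstein series, which is absurd. Since \(\mathcal{O}_F\) is a Dedekind domain, \(J = \prod \lambda^{k_\lambda}\), and it suffices to prove \(a_p(f) \equiv 1+\chi(p)p \pmod{\lambda^{k}}\) for each prime power \(\lambda^{k} \| J\) separately.

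Fix such \(\lambda^k\). The plan is to apply Sturm's theorem (\cite[Corollary 9.20]{Stein2007}, in its version for congruences modulo powers of a prime of \(\mathcal{O}_F\)) to the difference \(h = f - c\,E\). Here \(E\) is a suitable Eisenstein series of weight \(2\), level \(N\) and character \(\chi\), with Fourier coefficients \(\sigma_{\chi}(n)=\sum_{d\mid n}\chi(d)d\) (for trivial \(\chi\), use a level-raised combination such as \(E_2(z)-qE_2(qz)\) with \(q \mid N\)), and \(c\) is a normalising constant, presumably \(c=1\). Both \(f\) and \(E\) have multiplicative coefficients. At primes \(p \nmid N\) they satisfy the same Hecke recursion
\[
a_{p^{r+1}} = a_p a_{p^r} - \chi(p)p\, a_{p^{r-1}}.
\]
Hence the congruences \(a_p(f)\equiv \sigma_\chi(p) \pmod{\lambda^k}\) for \(p \le \texttt{st}\) propagate by induction to \(a_n(f) \equiv a_n(E) \pmod{\lambda^k}\) for all \(n \le \texttt{st}\). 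Sturm's theorem then gives \(h \equiv 0 \pmod{\lambda^k}\), and in particular \(a_p(f) \equiv 1+\chi(p)p \pmod{\lambda^k}\) for every prime \(p\), which is what we want.

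The main obstacle is making this comparison airtight, and there are two separate points. The first is the constant term of \(E\), which enters Sturm's count at \(n=0\). I would first try to show it is \(\lambda\)-adically harmless: its value is an \(L\)-value such as \(-B_{2,\chi}/4\) or \((1-q)/24\). If that fails, I would kill it by passing to a form with vanishing constant term, for example by replacing \(h\) with \(h - h|V_q\) or by applying a suitable Hecke operator, at the cost of enlarging the level and hence the Sturm bound. The second point is the primes \(p \mid N\), where \(a_p(f)\) no longer satisfies the above recursion, and indeed \(a_p(f)\in\{0,\pm\sqrt{\cdot}\}\)-type values occur. Here I would either choose the Eisenstein series (or its \(p\)-stabilisation) so that its coefficients at \(p\mid N\) match the recursion \(a_{p^r}=a_p^r\), or remove these primes by twisting out with \(U_p\)/\(V_p\) operators. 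In either case one must check that the relevant Sturm bound is still the \texttt{st} of the statement. I expect this bookkeeping, rather than the core Sturm argument, to be where the real work lies.
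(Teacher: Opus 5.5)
You follow the paper's route exactly. You take an Eisenstein series \(g\) with \(a_p(g)=1+\chi(p)p\) for \(p\nmid N\), set \(h=f-g\), and apply Sturm's theorem to \(h\) modulo each \(\lambda^k\,\|\,J\). Your propagation of the congruence from primes to all \(n\) coprime to \(N\), via multiplicativity and the common recursion \(a_{p^{r+1}}=a_pa_{p^r}-\chi(p)p\,a_{p^{r-1}}\), is correct. The paper leaves this step implicit.

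The gap is that Sturm's theorem requires \(a_n(h)\equiv 0\pmod{\lambda^k}\) for \emph{all} \(0\le n\le\texttt{st}\). The hypothesis only controls \(n\ge 1\) coprime to \(N\), and the two remaining cases are exactly the ones you defer without resolving.

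First, the constant term \(a_0(h)=-a_0(g)\) is an \(L\)-value, such as \(B_{2,\chi}/4\), or \((1-q)/24\) for trivial \(\chi\). No congruence on the \(a_p(f)\) with \(p\le\texttt{st}\) forces it to have \(\lambda\)-adic valuation \(\ge k\). Your phrase ``I would first try to show it is harmless'' marks precisely the missing argument.

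Second, for \(n=q^rn'\le\texttt{st}\) with \(q\mid N\) and \(q\nmid n'\), you have \(a_n(f)=a_q(f)^r a_{n'}(f)\). So you need \(a_q(f)\equiv\alpha_q\pmod{\lambda^k}\) for some \(U_q\)-eigenvalue \(\alpha_q\) occurring on weight-\(2\) Eisenstein series of level \(N\) and character \(\chi\). The hypothesis says nothing about \(a_q(f)\). For example, with trivial \(\chi\) and \(q\,\|\,N\), the Eisenstein values are \(1\) and \(q\) while \(a_q(f)=\pm1\). Then \(a_q(f)=-1\) would force \(q\equiv-1\pmod{\lambda^k}\), unless \(\lambda^k\mid 2\). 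This second issue disappears only in special cases such as \(N\) prime, where \(\texttt{st}=(N+1)/6<N\); the first issue remains even there.

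Your fallback remedies do not close the gap for the statement as given. Passing to \(h-h|V_q\), \(q\)-depleting via \(U_qV_q\), or using auxiliary Hecke operators puts you on \(\Gamma_0(Nq)\) or higher level. That level has Sturm bound at least \(q\cdot\texttt{st}\), so these would prove the lemma only with a larger bound.

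Your argument for \(J\neq0\) has the same defect, since ``Sturm in characteristic \(0\)'' also needs the constant terms and the bad-index coefficients to agree. Use Ramanujan--Petersson instead: \(|\sigma(a_p(f))|\le 2\sqrt p<|1+\chi(p)p|\) for every complex embedding \(\sigma\). This holds if \(\chi(p)=1\) or \(p\ge 7\), so you need at least one such good prime \(p\le\texttt{st}\).

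For comparison, the paper's proof has exactly your shape. It applies Sturm to \(h\) after checking only \(a_p(h)\) for \(p\nmid N\), so it passes over the same two points. You have correctly located where the work lies. To obtain a proof with the bound \texttt{st}, you must do one of the following:
\begin{itemize}
\item derive \(v_\lambda(a_0(g))\ge k\) and \(a_q(f)\equiv\alpha_q\pmod{\lambda^k}\) (for \(q\mid N\), \(q\le\texttt{st}\)) from the hypothesis;
\item add these as hypotheses;
\item accept a larger bound.
\end{itemize}
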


\begin{proof}
    Let \(g\) be an Eisenstein series having \(a_p(g)=1+\chi(p)p\) for every prime \(p\) not dividing the level. This exists, since it can be written as a linear combination of the series in the basis found in \cite[\S 4.6]{Diamond2005AForms} (taking \(\varphi=\chi,\psi=1\) and renormalising the coefficients). Let \(h=f-g\). Then for every prime \(p\) not dividing \(N\), 
    \[
    a_p(h) = a_p(f) - (1 + \chi(p)p).
    \]
    The claim is equivalent to saying that all the \(a_p(h)\) are in the ideal generated by the first \(a_p(h)\) with \(p\) up to \texttt{st}. This follows immediately from the Sturm theorem \cite[Corollary 9.20]{Stein2007} applied to the modular form \(h\).
\end{proof}

We conclude by stating two conjectures, which are suggested by the outcome of the code, on the possible torsion orders of non-CM simple abelian varieties of \(\GL_2\)-type and dimension up to \(5\), and on the primes dividing such torsion orders. All the numbers in the table are divisors of the predicted torsion orders computed by the code, and the list of newforms with the respective predicted torsion orders can be found at \cite{Alessandri2026MagmaVarieties}. We do not currently know whether the lists in Conjectures \ref{conjecture1} and \ref{conjecture2} are exhaustive. Code and data on how to associate in practice a classical modular form of weight \(2\), dimension \(2\), and trivial Nebentypus to all genus \(2\) curves of \(\GL_2\)-type whose Jacobians are \(\Q\)-isogenous to the modular abelian varieties associated to \(f\) is available at \cite{CostaModularAbelianSurfaces}.

\begin{conjecture}\label{conjecture1}
    Let \(A/\Q\) be a simple non-CM \(g\)-dimensional abelian variety of \(\GL_2\)-type. The possible orders of \(\Tors(A)(\Q)\) are listed below.
    \begin{center}
        \begin{table}[h!]
            \centering
            \begin{tabular}{r|l}
                \(g\) & \(|\Tors(A)(\Q)|\) \\
                \hline
                \(2\) & \(1, 2, 3, 4, 5, 6, 7, 8, 9, 10, 11, 12, 13, 14, 15, 16, 17, 
18, 19, 20, 21, 22, 23, 24, 28,\)\\ 
&\(31, 37, 44, 56\) \\
                \(3\) & \(1, 2, 3, 4, 5, 6, 7, 8, 9, 10, 11, 12, 13, 14, 15, 16, 17, 
18, 19, 20, 22, 23, 24, 28, 29,\)\\
&\(31, 32, 36, 37, 38, 40, 44, 46, 49, 58, 80, 83, 92, 160\)\\
                \(4\) & \(1, 2, 3, 4, 5, 6, 7, 8, 9, 10, 11, 12, 13, 14, 15, 16, 17, 
18, 19, 20, 21, 22, 23, 24, 25,\)\\
&\(26, 27, 28, 29, 31, 32, 34, 35, 36, 37, 38, 40, 41, 43, 44, 48, 49, 52, 55, 56, 57, 61,\)\\
&\(
63, 64, 68, 71, 72, 73, 74, 76, 80, 82, 88, 136, 146, 152, 155, 176\)\\
                \(5\) & \(1, 2, 3, 4, 5, 6, 7, 8, 9, 10, 11, 12, 13, 14, 16, 17, 18, 
19, 20, 21, 22, 23, 24, 25, 26,\)\\
&\(27, 28, 29, 31, 32, 34, 35, 36, 38, 40, 41, 43, 44, 46, 48, 50, 52, 53, 55, 56, 63, 64,\)\\
&\(68, 72, 80, 92, 96, 106, 110, 144, 160, 192, 288, 320\)\\
            \end{tabular}
            \label{tab:torsion}
        \end{table}
    \end{center}
\end{conjecture}

\begin{conjecture}\label{conjecture2}
    Let \(A/\Q\) be a simple non-CM \(g\)-dimensional abelian variety of \(\GL_2\)-type and let \(\ell\) be a prime such that \(A\) admits a \(\ell\)-torsion point.
    \begin{itemize}
        \item If \(g=2\), then \(\ell \in \{2, 3, 5, 7, 11, 13, 17, 19, 23, 31, 37\}\).
        \item If \(g=3\), then \(\ell \in \{2, 3, 5, 7, 11, 13, 17, 19, 23, 29, 31, 37, 83\}\).
        \item If \(g=4\), then \(\ell \in \{2, 3, 5, 7, 11, 13, 17, 19, 23, 29, 31, 37, 41, 43, 61, 71, 73 \}\).
        \item If \(g=5\), then \(\ell \in \{2, 3, 5, 7, 11, 13, 17, 19, 23, 29, 31, 41, 43, 53 \}\).
    \end{itemize}
\end{conjecture}

\section*{Acknowledgments}
The authors are grateful to Martin Azon, Sander Dahmen, Sam Frengley, Julian Demeio, Pip Goodman, Davide Lombardo, Laura Paladino and Matteo Verzobio for contributing to making this collaboration possible and for their useful insights.
\section*{Funding}
The authors are members of the INdAM group GNSAGA. JA was supported by UKRI Future Leaders Fellowship \texttt{MR/V021362/1} and the Max Planck Institute for Mathematics in Bonn, that she thanks for their hospitality and financial support. NC is supported by the \emph{``Piano di Sviluppo Dipartimentale''} with the title \emph{``Enhancing the Research in the Math Dept'', CUP C93C24000160005}. NC thanks University of Bath for its hospitality during her visit in May 2025, where most of this work has been carried on. 
\bibliographystyle{alpha}
\bibliography{refs}
\end{document}